\documentclass[11pt,a4paper]{article}

\usepackage[utf8]{inputenc}
\usepackage{amsmath, amsthm, amssymb, amsfonts}
\usepackage{geometry}
\usepackage{cite}
\usepackage{hyperref}
\usepackage{mathrsfs}

\newtheorem{theorem}{Theorem}[section]
\newtheorem{lemma}[theorem]{Lemma}
\newtheorem{corollary}[theorem]{Corollary}
\newtheorem{remark}[theorem]{Remark}

\newcommand{\R}{\mathbb{R}}
\newcommand{\Q}{\mathbb{Q}}
\newcommand{\Z}{\mathbb{Z}}
\newcommand{\C}{\mathbb{C}}
\newcommand{\OK}{\mathcal{O}_K}
\newcommand{\OF}{\mathcal{O}_F}

\newcommand{\Vol}{\operatorname{Vol}}

\title{\textbf{On the Natural Density of Monic Integer Polynomials \\ with Roots in a Fixed Number Field}}

\author{
    \textbf{Amirali Fatehizadeh} \\[0.2cm]
    \small \textit{Faculty of Mathematical Sciences, Shahid Beheshti University, Tehran, Iran} \\
    \small \texttt{Amirali.fatehizadeh@gmail.com} \\
    \small \texttt{A.fatehizadeh@mail.sbu.ac.ir}
}
\date{}

\begin{document}

\maketitle

\begin{abstract}
In this article, we investigate the statistical distribution and asymptotic behavior of the family of monic integer polynomials of degree $n$ having at least one root in a fixed number field $K$. Although the framework of thin sets implies that the natural density of this family in the parameter space of bounded height is zero, explicitly quantifying this vanishing rate is a central challenge in arithmetic statistics. Employing a hybrid approach that integrates the Mahler measure, Dirichlet's unit theorem, and residue analysis of the Dedekind zeta function, we demonstrate that the rate of convergence of this density to zero is strictly dependent on the degree $n$. Specifically, we prove that the degrees of the factors induce a phase transition in the asymptotic behavior; for polynomials of degree $n=2$, the decay rate is bounded by $O(H^{-1} \log H)$, whereas for higher degrees, the asymptotic behavior is dominated by the contribution of rational roots, yielding a bound of $O(H^{-1})$. Beyond deriving these asymptotic estimates, we apply principles from the geometry of numbers to establish explicit combinatorial bounds for counting both the reducible and irreducible components of these polynomials. These explicit bounds provide practical tools for computational evaluations within this domain.

\vspace{0.5cm}
\noindent \textbf{Keywords:} Monic Integer polynomials, Natural density, Number fields. \\
\noindent \textbf{2010 Mathematics Subject Classification:} 11R09; 11G35.
\end{abstract}

\section{Introduction}
The statistical study of integer polynomials and the investigation of the distribution of their algebraic properties is an active area of research at the intersection of number theory, algebraic geometry, and Diophantine geometry. A central theme in this branch of arithmetic statistics is determining the density and deriving the precise asymptotic behavior of specific families of polynomials within a parameter space of bounded height. Beyond their fundamental importance in understanding the algebraic structure of number fields, these studies provide precise computational tools and insights for arithmetic dynamics and algorithmic number theory.

The origins of the analytic approach to the distribution of polynomials can be traced back to David Hilbert's seminal work in 1892 and the formulation of Hilbert's Irreducibility Theorem. Hilbert qualitatively demonstrated the existence of infinitely many parametric specializations that preserve irreducibility, a result he subsequently utilized to advance the inverse Galois problem.

The transition from this qualitative perspective to explicit quantification was first achieved by van der Waerden in 1936. He demonstrated that exceptional polynomials (those that are reducible or whose Galois group is not the full symmetric group $S_n$) are extremely rare, and he established the first explicit error bound for their count.

Let $E_n(H)$ denote the number of monic integer polynomials of degree $n$ with coefficients bounded in absolute value by $H$, such that their Galois group is not $S_n$. Hilbert's Irreducibility Theorem implies that $E_n(H) = o(H^n)$. van der Waerden proved that:
\begin{equation*}
    E_n(H) = O\left( H^{n - \frac{1}{6(n-2)\log\log H}} \right),
\end{equation*}
and he further conjectured that $E_n(H) = O(H^{n-1})$. Subsequently, researchers such as Knobloch (1956), Gallagher (1973), Zywina (2010), and Dietmann (2013) improved this bound by employing methods such as the large sieve.

With the introduction and formulation of \textit{thin sets} by Jean-Pierre Serre (1989), this trajectory---some of whose historical milestones were discussed above---underwent significant evolution and achieved structural maturity. Within this framework, the set of polynomials lacking the generic Galois group---including the family investigated in this article---constitutes a thin set; consequently, their density in the space of all polynomials is necessarily zero.

The study of the distribution and counting of polynomial roots over number fields is intrinsically tied to the concept of height. From the perspective of Diophantine geometry, the classical Northcott theorem (1949) serves as the most fundamental principle in this area, guaranteeing the finiteness of algebraic numbers with bounded degree and bounded absolute Weil height.

Although Northcott's theorem is formulated in the space of Weil heights, it possesses a direct connection to the naive height of polynomials via the Mahler measure. When we restrict the search space to polynomials with bounded coefficient height, we are essentially imposing a strict constraint on the Weil height of their roots.

In the problem of counting polynomials that possess a root in a fixed field $K$, this correspondence indicates that the enumeration of roots in $K$ is fundamentally governed by the constraints arising from Northcott's finiteness property. However, since Northcott's theorem is merely an existential result, the central challenge in arithmetic statistics---and the primary objective of this article---is to provide the asymptotic rate and explicit, precise bounds for this behavior in the parameter space.

While the frameworks of Serre's thin sets and Northcott's theorem guarantee a vanishing density, these classical approaches often lack explicit combinatorial upper bounds. Knowing merely that the density is zero is insufficient for algorithmic applications.

In recent years, the study of counting problems concerning integer polynomials with specific algebraic and geometric constraints on their roots has witnessed remarkable progress through a combination of analytic, combinatorial, and arithmetic methods. In this vein, Dubickas and Sha (e.g., in their 2016 and 2018 papers) investigated the density and asymptotic behavior of various families of polynomials in a series of works. For instance, they demonstrated that the set of polynomials having exactly one or two roots of maximal modulus, or those whose roots lie within the neighborhood of a prescribed set of points, possess a positive density. On the other hand, in their work concerning polynomials with multiplicatively dependent roots, they counted the so-called \textit{generalized degenerate polynomials}, obtaining upper and lower bounds, and in certain cases, exact asymptotic formulas. The pivotal tools in these analyses include heights and the Mahler measure; however, the structure of the error terms varies depending on the family under consideration. The success of this framework provides the motivation to revisit classical counting problems in this domain utilizing these modern tools.

Motivated by the analytic precision present in the works of Dubickas and Sha, and leveraging counting principles from number theory, this article provides a direct proof of the vanishing natural density of the targeted polynomials. Specifically, we present an elementary-analytic quantification for the density of monic integer polynomials of degree $n$ having at least one root in a fixed number field.

While bounds for arbitrary number fields are often qualitative, in the special case where $K = \Q$, Chela (1963) and later Kuba (2009) precisely showed that the density of reducible monic polynomials exhibits an asymptotic growth of order $O(H^{n-1})$ (and $O(H\log H)$ for $n=2$). Nevertheless, deriving completely explicit bounds that extend this behavior to arbitrary number fields remains a computational challenge. In this article, not only do we extract these explicit bounds, but we also demonstrate that a behavioral phase transition occurs at degree $n=2$ compared to higher degrees. Furthermore, our analysis establishes the phenomenon of \textit{rational root dominance} in a combinatorial and explicit manner for any arbitrary number field.

The main results of this article are summarized as follows:
\begin{itemize}
    \item \textbf{Deriving asymptotic rates for the reducible case:} We show that the vanishing rate of the density for the target polynomials is bounded by $O(H^{-1})$ for $n > 2$, and by $O(H^{-1}\log H)$ for $n = 2$.
    \item \textbf{Analyzing the irreducible case:} By applying Dirichlet's unit theorem, examining the pole of the Dedekind zeta function, and utilizing the Lipschitz principle on the logarithmic embeddings of the roots, we prove that the contribution of the targeted irreducible polynomials is bounded by $O(H(\log H)^{n-1})$. This establishes an explicit connection between the rank of the field's unit group and the distribution of minimal polynomials in the parameter space.
    \item \textbf{Explicit computational bounds:} We provide an explicit combinatorial upper bound that estimates the number of these polynomials within a bounded box, serving as an evaluable computational tool.
\end{itemize}
\section{Preliminaries}
In this section, we establish the framework of the parameter space, the statistical concepts, and the algebraic tools required to prove the main results.

Let $n \ge 2$ be a fixed integer. We denote the set of all monic integer polynomials of degree $n$ by $\mathcal{P}_n$. We identify each polynomial $P(x) = x^n + a_{n-1}x^{n-1} + \dots + a_0 \in \mathcal{P}_n$ with its coefficient vector $\boldsymbol{a} = (a_0, \dots, a_{n-1}) \in \Z^n$. Accordingly, we define the height of a polynomial as the $\ell^\infty$-norm of its coefficient vector:
\begin{equation*}
    H(P) = \|\boldsymbol{a}\|_\infty = \max_{0 \le i \le n-1} |a_i|.
\end{equation*}
We bound the search space using an integer parameter $H \ge 1$ and define the parametric box $\mathcal{B}_n(H)$ as follows:
\begin{equation*}
    \mathcal{B}_n(H) = \{ P \in \mathcal{P}_n \mid H(P) \le H \}.
\end{equation*}
Clearly, the cardinality of this finite box is $\#\mathcal{B}_n(H) = (2H + 1)^n$. To investigate the statistical distribution of any arbitrary subset $S \subseteq \mathcal{P}_n$, we define the \textit{upper natural density} as follows:
\begin{equation*}
    \overline{d}(S) = \limsup_{H \to \infty} \frac{\#(S \cap \mathcal{B}_n(H))}{\#\mathcal{B}_n(H)}.
\end{equation*}
The upper natural density is finitely subadditive, but it is not countably subadditive in general.

Let $K$ be a fixed number field of degree $D = [K:\Q]$, and let $\OK$ denote its ring of integers. If a polynomial $P \in \mathcal{P}_n$ has a root $\alpha \in K$, then since $P$ is monic with integer coefficients, it must be that $\alpha \in \OK$. Consequently, its minimal polynomial over $\Q$, denoted by $m_\alpha(x)$, is a monic polynomial with integer coefficients. By Gauss's Lemma, since $m_\alpha(x)$ divides $P(x)$ in $\Q[x]$, their quotient must also belong to $\Z[x]$. Therefore, the factorization $P(x) = m_\alpha(x) \cdot Q(x)$ holds over the ring $\Z[x]$, where $Q(x)$ is also a monic polynomial.

To control the asymptotic behavior when bounding the coefficients of these factors, we employ the concept of the Mahler measure. For any polynomial $P(x) = a_n(x - \alpha_1) \dots (x - \alpha_n) \in \C[x]$, its Mahler measure is defined as $M(P) = |a_n| \prod_{i=1}^n \max\{1, |\alpha_i|\}$. One of the well-known inequalities relating the height and the Mahler measure for a polynomial of degree $n$ is:
\begin{equation*}
    H(P) 2^{-n} \le M(P) \le H(P)\sqrt{n+1}.
\end{equation*}
Given the multiplicative property of the Mahler measure and the above inequality, the following classical bound is obtained:

\begin{lemma}[Landau-Mignotte Bound]\label{lem:landau}
Let $P \in \Z[x]$ be a polynomial of degree $n \ge 1$ and let $P(x) = m(x) \cdot Q(x)$ be a factorization, where $m, Q \in \Z[x]$ are polynomials with non-zero leading coefficients. Then there exists a constant $C_n \ge 1$ (depending only on $n$) such that
\begin{equation*}
    H(m) \cdot H(Q) \le C_n H(P).
\end{equation*}
Throughout this paper, we adopt the explicit value $C_n = 2^n \sqrt{n+1}$ as a standard bound.
\end{lemma}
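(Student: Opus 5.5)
Since the statement is presented as a consequence of the multiplicativity of the Mahler measure together with the displayed comparison between height and Mahler measure, the plan is to chain these two facts. The comparison is used in the form $H(R)\le 2^{\deg R} M(R)$ for the factors and $M(P)\le \sqrt{n+1}\,H(P)$ for $P$ itself.

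\textbf{Step 1: multiplicativity.} Write $m(x)=b\prod_{j}(x-\beta_j)$ and $Q(x)=c\prod_k(x-\gamma_k)$. Then $P=m\cdot Q$ has leading coefficient $bc$, and its multiset of roots is the union of the $\beta_j$ and the $\gamma_k$. Directly from the definition of $M$, this gives $M(P)=M(m)\,M(Q)$.

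\textbf{Step 2: bound each factor.} Let $d=\deg m$ and $e=\deg Q$, so $d+e=n$. The $i$-th coefficient of $m$ is, up to sign, $b$ times the elementary symmetric function $e_{d-i}(\beta_1,\dots,\beta_d)$. It is bounded in absolute value by $\binom{d}{i}|b|\prod_j\max\{1,|\beta_j|\}=\binom{d}{i}M(m)$. Hence
\[
H(m)\le \binom{d}{\lfloor d/2\rfloor}M(m)\le 2^{d}M(m),
\]
which is the lower inequality $H\,2^{-\deg}\le M$ applied to $m$. Similarly, $H(Q)\le 2^{e}M(Q)$.

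\textbf{Step 3: combine.}
\[
H(m)H(Q)\le 2^{d+e}M(m)M(Q)=2^{n}M(P)\le 2^{n}\sqrt{n+1}\,H(P).
\]
The last inequality is the upper bound $M(P)\le\|P\|_2\le\sqrt{n+1}\,H(P)$, which follows from Jensen's formula or Landau's inequality $M(P)\le\|P\|_2$; the paper takes this as known. This yields the claim with $C_n=2^n\sqrt{n+1}\ge 1$.

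\textbf{Where care is needed.} There is no genuine obstacle, but one normalization issue should be checked. The paper defines $H(P)$ for monic $P$ as the maximum over the non-leading coefficients, whereas the Mahler-measure comparisons are stated for the full coefficient vector.

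For monic $P$ this changes nothing except in the degenerate case where all non-leading coefficients vanish. In that case $P=x^n$, so $H(P)=0$ under the strict definition, while $m$ and $Q$ are powers of $x$ and also have height $0$ by that convention. The inequality then holds trivially as $0\le 0$.

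Otherwise, use the full-vector height, which is at least $1$ and dominates the leading coefficients. For monic $P$ with some nonzero non-leading coefficient, the two heights agree.

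So I would state at the outset that $H(\cdot)$ is read as the maximum absolute value of all coefficients. For monic polynomials this coincides with the paper's definition whenever the latter is nonzero, and the degenerate case is handled separately as above.
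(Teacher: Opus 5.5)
Your proof is correct and follows the route the paper indicates: multiplicativity $M(P)=M(m)M(Q)$, then $H(R)\le 2^{\deg R}M(R)$ for each factor and $M(P)\le\sqrt{n+1}\,H(P)$, which gives $C_n=2^n\sqrt{n+1}$. Your normalization remark is a genuine addition, because under the paper's non-leading-coefficient height the quoted inequality $M(P)\le H(P)\sqrt{n+1}$ fails for $P=x^n$; reading $H$ as the full-coefficient height is the right fix, since this can only enlarge $H(m)H(Q)$ and leaves $H(P)$ unchanged for monic $P\neq x^n$.
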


To count the irreducible polynomials with a root in $K$, we need to control the distribution of elements within the ring $\OK$. Each non-zero element $\alpha \in \OK$ generates a principal ideal $\mathfrak{a} = (\alpha)$. The norm of this ideal, defined as $N(\mathfrak{a}) = [\OK : \mathfrak{a}]$, is equal to the absolute value of the algebraic norm of its generator, namely $|N_{K/\Q}(\alpha)|$.

For counting the number of integral ideals of $\OK$ with bounded norm, we use the Dedekind zeta function, defined for complex numbers $s$ with $\operatorname{Re}(s) > 1$ as:
\begin{equation*}
    \zeta_K(s) = \sum_{0 \neq \mathfrak{a} \subseteq \OK} \frac{1}{N(\mathfrak{a})^s},
\end{equation*}
where the sum is over all non-zero integral ideals of $\OK$. The function $\zeta_K(s)$ admits an analytic continuation to the entire complex plane, with a simple pole at $s = 1$. The residue at this pole is given by the class number formula:
\begin{equation*}
    \kappa_K := \lim_{s \to 1} (s - 1)\zeta_K(s) = \frac{2^{r_1}(2\pi)^{r_2}h_K R_K}{w_K \sqrt{|D_K|}},
\end{equation*}
where $r_1$ and $r_2$ are the number of real and complex embeddings of $K$, respectively; $h_K$ is the class number, $R_K$ is the regulator, $w_K$ is the number of roots of unity in $K$, and $D_K$ is the discriminant of $K$.

Applying a standard Tauberian theorem (due to Landau or Weber) to this simple pole shows that if $I_K(X)$ denotes the number of integral ideals of $\OK$ with norm at most $X$, then the following asymptotic behavior holds:
\begin{equation*}
    I_K(X) = \kappa_K X + O\left(X^{1 - 1/D}\right).
\end{equation*}
In our counting arguments, since the algebraic norm of an algebraic integer $\alpha$ is proportional to the height of its minimal polynomial, this asymptotic result ensures that the number of permissible principal ideals generated by such integers is at most of order $O(H)$.

Fixing an ideal $(\alpha)$ does not uniquely determine its generator; rather, any other generator of this ideal is of the form $\alpha' = \alpha u$ for some unit $u \in \OK^\times$. To count the elements precisely, we must enumerate the number of units that remain permissible under the height constraints of the problem.

According to Dirichlet's Unit Theorem, this group has the isomorphic structure $\OK^\times \cong W_K \times \Z^{r_1+r_2-1}$, where $W_K$ is the finite cyclic group of roots of unity contained in $K$. It is well-known that the rank of the unit group is $r_1 + r_2 - 1$. 

Furthermore, we utilize the logarithmic embedding:
\begin{equation*}
    \log: \OK^\times \longrightarrow \R^{r_1+r_2}, \qquad u \mapsto \left( \log|\sigma_1(u)|, \dots, \log|\sigma_{r_1}(u)|, 2\log|\sigma_{r_1+1}(u)|, \dots, 2\log|\sigma_{r_1+r_2}(u)| \right)
\end{equation*}
The image of $\OK^\times$ under this map is a lattice of rank $r_1 + r_2 - 1$ within a hyperplane in the Euclidean space $\R^{r_1+r_2}$.

When an irreducible minimal polynomial $P$ with bounded height $H(P) \le H$ generates a root $\alpha \in \OK$, this root defines a subfield $F = \Q(\alpha)$ of degree $n = [F:\Q]$. Since $P$ is monic, its constant term is equal to $\pm N_{F/\Q}(\alpha)$, which implies $|N_{F/\Q}(\alpha)| \le H$. Note that if the constant term were zero, then $x$ would be a factor of $P(x)$, contradicting its irreducibility.

Furthermore, by the classical Cauchy bound for the roots of a polynomial, for any embedding $\sigma_i : F \hookrightarrow \C$, we have:
\begin{equation*}
    |\sigma_i(\alpha)| \le 1 + H \le 2H.
\end{equation*}
Using the multiplicative property of the norm, for any fixed embedding $\sigma_i$, we can write $|\sigma_i(\alpha)| = \frac{|N_{F/\Q}(\alpha)|}{\prod_{j \neq i} |\sigma_j(\alpha)|}$. Since $|N_{F/\Q}(\alpha)| \ge 1$ (as $\alpha \in \OK$ is non-zero), and by the upper bound on the other conjugates, we deduce a lower bound $\prod_{j \neq i} |\sigma_j(\alpha)| \ge (2H)^{n-1}$. Combining the upper and lower bounds yields:
\begin{equation*}
    \frac{1}{(2H)^{n-1}} \le |\sigma_i(\alpha)| \le 2H.
\end{equation*}
Taking logarithms on all sides, we obtain for each embedding $\sigma_i$:
\begin{equation*}
    -(n-1)\log(2H) \le \log|\sigma_i(\alpha)| \le \log(2H).
\end{equation*}
Thus, the logarithmic vector $(\log|\sigma_1(\alpha)|, \dots, \log|\sigma_n(\alpha)|)$ is confined within a hypercube in $\R^n$ where the side lengths are of magnitude $O(\log H)$.

Based on the geometry of numbers, and by applying the Lipschitz principle for counting lattice points in parametrically bounded domains, the number of lattice points (corresponding to permissible units) inside this region is proportional to its volume. Since the dimension of the unit lattice is $r_1 + r_2 - 1$, the number of permissible units is bounded by $O((\log H)^{r_1+r_2-1})$. Consequently, this component contributes a growth rate of order $O((\log H)^{n-1})$ to our calculations, as $n = r_1 + 2r_2 = D$ for $F \subseteq K$. 

This decomposition into first counting ideals and then counting units within the logarithmic lattice constitutes the core of our analysis for counting the targeted irreducible polynomials.

Throughout this article, for two non-negative functions $U$ and $V$, the notation $U \ll V$ or $U = O(V)$ means that the inequality $U \le cV$ holds for some positive constant $c$. This constant is strictly independent of the height parameter $H$, though it may depend on fixed system parameters such as the polynomial degree, the field degree, and other field invariants.\section{Proof of the Main Theorem}
In this section, we provide a straightforward proof of the main theorem of the article. Our strategy is based on an asymptotic analysis, direct counting, and explicit bounding of the target polynomials within the parametric box $\mathcal{B}_n(H)$.

\begin{theorem}\label{thm:main}
Let $K$ be a fixed number field. The natural density of the set of monic integer polynomials of degree $n$ that have at least one root in $K$ is zero.
\end{theorem}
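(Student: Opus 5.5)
Write $S_K\subseteq\mathcal{P}_n$ for the set of monic polynomials of degree $n$ having a root in $K$. The plan is to bound $\#(S_K\cap\mathcal{B}_n(H))$ by $o(H^n)$ through a decomposition according to the minimal polynomial of the root. If $P\in S_K\cap\mathcal{B}_n(H)$ has a root $\alpha\in K$, then, as noted in the Preliminaries, $\alpha\in\OK$ and $P=m_\alpha Q$ in $\Z[x]$ with $m_\alpha,Q$ monic. Let $d=\deg m_\alpha$. Since $d$ divides $D=[K:\Q]$ and $d\le n$, only finitely many values of $d$ occur. By finite subadditivity it suffices to show, for each admissible $d$, that the number of such $P$ is $o(H^n)$.

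\textbf{Case $d<n$ ($P$ reducible).} By Lemma~\ref{lem:landau}, $H(m_\alpha)H(Q)\le C_nH$. Hence
\[
\#\{P\}\le\sum_{m}\#\{Q:\ H(Q)\le C_nH/H(m)\},
\]
where $m$ runs over monic degree-$d$ minimal polynomials of elements of $\OK$ and $Q$ over monic integer polynomials of degree $n-d$.

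For each $m$ with $H(m)=h$, the number of admissible $Q$ is $\ll (H/h)^{n-d}$. The key input is a bound on the number of admissible $m$ with $H(m)\le h$. The crude bound $\ll h^{d}$ (all monic degree-$d$ polynomials) already works when $d<n$. Summing dyadically over $h\in[2^k,2^{k+1})$ gives
\[
\sum_k 2^{kd}\,(H/2^k)^{n-d}\ll H^{n-d}\sum_{k\le\log_2 H}2^{k(2d-n)}.
\]
This is $O(H^{n-1})$-type when $2d<n$, $O(H^{n/2}\log H)$ when $2d=n$, and $O(H^{d})$ when $2d>n$. Every one of these is $o(H^n)$ because $d<n$. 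In particular, rational roots ($d=1$) contribute $O(H^{n-1})$, and $O(H\log H)$ for $n=2$, matching the expected rates. Finer estimates, counting $m$ through ideals and units, are needed only for sharper rates and not for density zero.

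\textbf{Case $d=n$ ($P=m_\alpha$ irreducible).} Here $P$ is determined by $\alpha\in\OK$ up to its $n$ conjugates, and $\mathbb{Q}(\alpha)$ is one of finitely many subfields $F\subseteq K$ of degree $n$. We count the $\alpha\in\mathcal{O}_F$ generating $F$ whose minimal polynomial has height at most $H$. As in the Preliminaries, $|N_{F/\Q}(\alpha)|\le H$, so the ideal $(\alpha)$ is one of at most $I_F(H)\ll H$ integral ideals. For a fixed ideal, the generators differ by units, and the bound $|\sigma_i(\alpha)|\le 2H$ confines $\log$ of the generator to a box of side $O(\log H)$. This leaves $O((\log H)^{r_1+r_2-1})$ choices modulo the finite group $W_F$. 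The total is $O(H(\log H)^{n-1})$, which is $o(H^n)$ since $n\ge2$.

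The step I expect to be the main obstacle is the unit count: making the "Lipschitz principle" rigorous. To do this, I would fix one generator $\alpha_0$ per ideal and note that admissible $u$ must satisfy $\log|\sigma_i(u)|\le\log(2H)-\log|\sigma_i(\alpha_0)|$ for every $i$, together with the sum-zero condition. This places $\log u$ in a bounded simplex-like region of the trace-zero hyperplane of diameter $O(\log H)$, uniformly in $\alpha_0$ once we use $|N(\alpha_0)|\le H$. A lattice of full rank in that hyperplane has $\ll(\log H+1)^{r_1+r_2-1}$ points in such a region, by covering with translates of a fundamental domain. Should this be delicate, there is a fallback that still yields density zero: the irreducible count is trivially at most $\#\{\alpha\in\mathcal{O}_F:|\sigma_i(\alpha)|\le 2H\ \forall i\}\ll H^n$, which is not $o(H^n)$, so the unit argument, or an equivalent lattice-point bound, is genuinely needed in this case. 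Combining the two cases and dividing by $(2H+1)^n$ gives $\overline{d}(S_K)=0$.
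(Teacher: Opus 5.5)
Your proposal is correct and follows essentially the same route as the paper. You split by the degree $d$ of $m_\alpha$. The reducible part is bounded via Landau--Mignotte and a sum over $H(m_\alpha)$; your dyadic grouping with the crude $\ll h^d$ count reproduces the paper's exact-height count $(2k+1)^d-(2k-1)^d$. The irreducible part is bounded by the number of ideals of norm $\le H$ times the number of units in a log-region of diameter $O(\log H)$; your fixed-generator/simplex argument is a more careful version of the paper's appeal to the Lipschitz principle.

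The only blemish is your closing ``fallback'' sentence, which contradicts itself: the trivial $\ll H^n$ bound does not give density zero. Nothing in the main argument depends on it.
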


\begin{proof}
Let $M_K(H) \subseteq \mathcal{B}_n(H)$ denote the set of all target polynomials. As discussed in the previous section, the existence of a root $\alpha \in K$ for a polynomial $P \in M_K(H)$ implies that $\alpha \in \OK$, and its minimal polynomial, $m_\alpha(x)$, must be a factor of $P(x)$ in the ring $\Z[x]$.

We partition the set $M_K(H)$ into two disjoint subsets based on its reducibility behavior over the field of rational numbers:
\begin{equation*}
    M_K(H) = \mathrm{red}_n(H) \cup \mathrm{irr}_n(H),
\end{equation*}
where $\mathrm{red}_n(H)$ comprises the reducible polynomials and $\mathrm{irr}_n(H)$ comprises the irreducible ones. We will bound the cardinality of each of these sets independently.

First, consider a polynomial $P \in \mathrm{red}_n(H)$. Such a polynomial admits a factorization $P(x) = m(x) \cdot Q(x)$ over $\Z[x]$, where $m(x)$ is the minimal polynomial of a root $\alpha \in \OK$ with degree $\deg(m) = d$. Since $P$ is reducible, the degree of this factor must lie in the range $1 \le d \le \min(n-1, D)$, where $D =[K:\Q]$. 

By Lemma \ref{lem:landau}, the height of the quotient $Q(x)$, which is a monic polynomial of degree $n-d$, satisfies the following inequality:
\begin{equation*}
    H(Q) \le \frac{C_n H(P)}{H(m)} \le \frac{C_n H}{H(m)}.
\end{equation*}
To precisely count the possible pairs $(m, Q)$, assume that the height of the minimal polynomial is fixed at $H(m) = k$.
\begin{itemize}
    \item \textbf{Number of choices for $m(x)$:} The monic polynomial $m(x)$ is determined by its $d$ non-leading coefficients. For its height to be exactly $k$, all its coefficients must lie in the interval $[-k, k]$, with at least one coefficient being exactly $\pm k$. The total number of such polynomials is exactly $(2k+1)^d - (2k-1)^d$, which exhibits an asymptotic behavior of $O(k^{d-1})$.
    \item \textbf{Number of choices for $Q(x)$:} Similarly, the monic polynomial $Q(x)$ is determined by its $n-d$ non-leading coefficients, each of which is bounded within the interval $[-C_n H/k, C_n H/k]$. Thus, the number of possible choices for $Q(x)$ is of the order $O\left( \left(\frac{H}{k}\right)^{n-d} \right)$.
\end{itemize}

By summing the product of these bounds over all possible heights $1 \le k \le C_n H$ and all permissible degrees $1 \le d \le \min(n-1, D)$, we obtain the following upper bound for the cardinality of the reducible component:
\begin{equation*}
    \#\mathrm{red}_n(H) \ll \sum_{d=1}^{\min(n-1, D)} \sum_{k=1}^{C_n H} k^{d-1} \cdot \left(\frac{H}{k}\right)^{n-d} = \sum_{d=1}^{\min(n-1, D)} H^{n-d} \sum_{k=1}^{C_n H} k^{2d-n-1}.
\end{equation*}

To evaluate the asymptotic behavior of this expression, we consider the exponent of the variable $k$ in the inner sum, denoted by $p = 2d - n - 1$. Note that $p$ is an integer. The asymptotic behavior of the series $\sum k^p$ depends entirely on the sign and value of $p$. We analyze this series in three distinct cases:

\begin{itemize}
    \item \textbf{Case 1 ($p < -1$):} In this regime, the series $\sum k^p$ converges. Consequently, the sum remains bounded by a constant independent of $H$, i.e., $O(1)$, regardless of how large $H$ becomes. Thus, the total contribution of this part is $H^{n-d} \cdot O(1) \ll H^{n-1}$.
    
    \item \textbf{Case 2 ($p = -1$):} This case occurs only if $n$ is even and $d = n/2$. Here, the sum reduces to a harmonic series, which exhibits logarithmic divergence, namely $\sum_{k=1}^{C_n H} k^{-1} = O(\log H)$. The contribution of this section is $H^{n-d}\log H$. For the specific case $n=2$ (where $d=1$), this yields a contribution of $O(H \log H)$. For $n \ge 4$, since $d = n/2 \le n-2$, this contribution is also dominated by $O(H^{n-1})$.
    
    \item \textbf{Case 3 ($p > -1$):} Under this condition, the series diverges, and its asymptotic behavior, based on an integral approximation, is $O(H^{p+1}) = O(H^{2d-n})$. Therefore, the contribution of this part is $H^{n-d} \cdot H^{2d-n} = H^d$. Since the maximum possible value for $d$ is $n-1$, this case is also strictly bounded by $O(H^{n-1})$.
\end{itemize}

Combining these three cases, we conclude that:
\begin{equation*}
    \#\mathrm{red}_n(H) \ll 
    \begin{cases} 
      H \log H & \text{if } n = 2, \\
      H^{n-1} & \text{if } n > 2.
    \end{cases}
\end{equation*}

Now, consider the case where $P \in \mathrm{irr}_n(H)$. In this scenario, the polynomial $P$ is irreducible over $\Q$ and possesses a root $\alpha \in \OK$. Consequently, $P$ must be the minimal polynomial of $\alpha$, and its degree is exactly $n$. The root $\alpha$ generates a subfield $F = \Q(\alpha) \subseteq K$ with degree $[F:\Q] = n$. Since the base field $K$ is fixed, the number of such subfields is finite; we denote the set of these subfields by $\mathrm{Sub}_n(K)$. Note that if $[K:\Q] < n$, the set of irreducible polynomials satisfying this condition is empty. Furthermore, each minimal polynomial of degree $n$ produces exactly $n$ conjugate roots. Therefore, counting the number of permissible elements $\alpha$ provides a valid upper bound for the number of these polynomials.

We first perform the count for a fixed subfield $F \in \mathrm{Sub}_n(K)$. The element $\alpha$ generates a principal ideal $\mathfrak{a} = (\alpha)$ in the ring of integers $\OF$. The algebraic norm of this root is equal to the absolute value of the constant term of the polynomial, i.e., $|N_{F/\Q}(\alpha)| = |P(0)| \le H$. As established in the previous section, the total number of possible principal ideals satisfying this norm constraint is at most of order $O(H)$.

Any other generator for these ideals is of the form $\alpha' = \alpha u$, where $u \in \OF^\times$ is a unit. By Dirichlet's Unit Theorem, since $[F:\Q] = n$, the rank of the unit group is $r = r_1 + r_2 - 1 \le n - 1$. By applying the logarithmic embedding, and given that all coefficients of $P$ are bounded by $H$, the logarithmic vector of the roots is confined within a bounded region with side lengths of order $O(\log H)$. As discussed earlier, the number of permissible units in this lattice region is at most $O((\log H)^{n-1})$.

By multiplying the number of ideals by the number of permissible units, the total number of target irreducible polynomials within the subfield $F$ is bounded by $O(H(\log H)^{n-1})$. Finally, by summing this bound over the finite set of subfields $\mathrm{Sub}_n(K)$, and noting that the cardinality of this set is $O(1)$ relative to $H$, the total number of permissible irreducible polynomials is bounded as follows:
\begin{equation*}
    \#\mathrm{irr}_n(H) \ll \sum_{F \in \mathrm{Sub}_n(K)} H(\log H)^{n-1} \ll H(\log H)^{n-1}.
\end{equation*}

Having determined the cardinalities of both components, the upper natural density of the entire target family is given by:
\begin{equation*}
    \overline{d}(M_K) = \limsup_{H \to \infty} \frac{\#\mathrm{red}_n(H) + \#\mathrm{irr}_n(H)}{\#\mathcal{B}_n(H)} \ll \limsup_{H \to \infty} \frac{O(H^{n-1}) + O(H(\log H)^{n-1})}{O(H^n)}.
\end{equation*}
Consequently, the above limit tends to zero. Since the natural density is a non-negative quantity, it follows that the natural density exists and is equal to zero.
\end{proof}

\begin{remark}
The asymptotic analysis of the summation over the degrees of the factor $m(x)$ reveals an intriguing structural property. Since the leading power of $H$ is generated by $\max(d, n-d)$, the absolute maximum exponent always occurs at $d=1$, which yields the $O(H^{n-1})$ behavior. Algebraically, this case corresponds to the situation where the root $\alpha$ is a rational integer (belonging to the base subfield $\Q$). This result indicates that within the parameter space, the primary drivers for generating polynomials with roots in $K$ are, in fact, polynomials with rational roots, whereas more complex roots ($d > 1$) contribute significantly less. Furthermore, it should be noted that if the degree of the base field is smaller than the degree of the polynomial ($D < n-1$), our bound is markedly improved and becomes of the order $O(H^{n-D})$.
\end{remark}\section{Effective and Explicit Asymptotic Bounds}
While Theorem \ref{thm:main} characterizes the statistical distribution of the targeted polynomials from a theoretical perspective, in computational applications, access to explicit upper bounds and knowledge of the precise rate of convergence to zero is of particular importance. In this section, we determine the exact convergence rates and provide independent effective bounds for the reducible and irreducible components of the search space.

\begin{corollary}[Asymptotic Convergence Rate]
By comparing the bounds derived in Section 3, it is observed that for $n \ge 3$, the contribution of irreducible polynomials is negligible compared to that of the reducible ones. By dividing the dominant term by the total cardinality of the search space, the vanishing rate of the natural density for degrees $n = 2$ and $n \ge 3$ is $O(H^{-1}\log H)$ and $O(H^{-1})$, respectively.
\end{corollary}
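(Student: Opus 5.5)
The plan is to read the corollary off the two counts in the proof of Theorem~\ref{thm:main}: the estimate for $\#\mathrm{red}_n(H)$, and the estimate $\#\mathrm{irr}_n(H) \ll H(\log H)^{n-1}$. Since $M_K(H)=\mathrm{red}_n(H)\cup\mathrm{irr}_n(H)$ and $\#\mathcal{B}_n(H)=(2H+1)^n \gg H^n$, we get
\begin{equation*}
\frac{\#M_K(H)}{\#\mathcal{B}_n(H)} \ll \frac{\#\mathrm{red}_n(H)+\#\mathrm{irr}_n(H)}{H^n}.
\end{equation*}
So the statement reduces to comparing the two numerators and deciding which term dominates in each range of $n$.

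For $n=2$, both components are $\ll H\log H$. The reducible count is $H\log H$ by Case~2 of the proof. The irreducible count is $H(\log H)^{n-1}=H\log H$. Dividing by $H^2$ gives $O(H^{-1}\log H)$.

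For $n\ge 3$, the reducible count is $\ll H^{n-1}$. I then need $H(\log H)^{n-1}=o(H^{n-1})$, which follows from $(\log H)^{n-1}\ll H^{n-2}$ for $n\ge 3$. This is elementary, since any fixed power of $\log H$ is $o(H^{\varepsilon})$. It is the precise sense in which the irreducible contribution is negligible. Hence $\#M_K(H)\ll H^{n-1}$, and the density is $O(H^{-1})$.

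The main obstacle is not the arithmetic but the justification of the irreducible bound itself. The count of generators uses the unit lattice of $\mathcal{O}_F^\times$, which has rank $r_1+r_2-1$. That rank can be strictly smaller than $n-1$, so $(\log H)^{n-1}$ is only an upper bound. Moreover, the count of units per ideal should come from constraining $\log|\sigma_i(\alpha u)|$ to the box of side $O(\log H)$ described in the Preliminaries. I would rely on the earlier discussion for this and only note that any polylogarithmic factor suffices for $n\ge3$. For $n=2$ the exponent matters: for real quadratic $F$ the unit rank is $1$, giving exactly $H\log H$, so the claimed rate still holds.

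Finally, I would note that the rates are upper bounds, phrased as vanishing rates for the upper density ratio. No matching lower bound is claimed. If one were wanted, the polynomials $(x-a)Q(x)$ with $a\in\mathbb{Z}$ already give $\gg H^{n-1}$ for $n\ge3$, by analogy with the Chela--Kuba results cited in the introduction.
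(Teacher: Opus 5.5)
Your proposal is correct and follows the paper's reasoning: the corollary is read off directly from the Section~3 estimates $\#\mathrm{red}_n(H)\ll H\log H$ for $n=2$ and $\ll H^{n-1}$ for $n>2$, together with $\#\mathrm{irr}_n(H)\ll H(\log H)^{n-1}$, divided by $\#\mathcal{B}_n(H)=(2H+1)^n$. Your explicit check that $(\log H)^{n-1}\ll H^{n-2}$ for $n\ge 3$ makes precise the paper's informal claim that the irreducible part is ``negligible.'' The same goes for your remark that polynomials such as $xQ(x)$ give a lower bound $\gg H^{n-1}$ for the reducible part; the paper compares only upper bounds.
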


\begin{corollary}[Explicit Computational Bound]
Let $n \ge 2$ be an integer and $K$ be a fixed number field of degree $D = [K:\Q]$. Let $\#\mathrm{red}_n(H)$ denote the exact number of reducible monic integer polynomials of degree $n$ in the box $\mathcal{B}_n(H)$ having at least one root in $K$. Then, for any $H \ge 1$, the following explicit bound holds:
\begin{equation*}
    \#\mathrm{red}_n(H) \le \sum_{d=1}^{\min(n-1, D)} \sum_{k=1}^{C_n H} \Big[ (2k+1)^d - (2k-1)^d \Big] \left( 2 \left\lfloor \frac{C_n H}{k} \right\rfloor + 1 \right)^{n-d},
\end{equation*}
where $C_n = 2^n \sqrt{n+1}$ is the Landau-Mignotte constant.
\end{corollary}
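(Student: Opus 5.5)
The bound will come from an injection argument. I define a map from $\mathrm{red}_n(H)$ into a set of pairs $(m,Q)$ and then count that set exactly, without hiding any constants in $\ll$.

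\textbf{Constructing the map.} Take $P \in \mathrm{red}_n(H)$ and fix a root $\alpha\in K$ of $P$. As in the proof of Theorem \ref{thm:main}, $\alpha\in\OK$, and its minimal polynomial $m=m_\alpha$ is monic in $\Z[x]$ of degree $d\le D$. Gauss's Lemma gives $P=mQ$ with $Q$ monic in $\Z[x]$. The degree range needs care.
\begin{itemize}
\item If $d\le n-1$, then $1\le d\le\min(n-1,D)$, as required.
\item The case $d=n$ would make $P=m$ irreducible, which excludes it from $\mathrm{red}_n(H)$.
\item If $P$ is reducible but the root in $K$ happens to have degree $n$, that is impossible, since then $m=P$ would be irreducible.
\end{itemize}
So every $P$ yields a pair with $d$ in the summation range. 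I then send $P\mapsto(m,Q)$. This map is injective because $P=mQ$ is recovered from the pair.

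\textbf{Bounding the coefficients.} Set $k=H(m)$. Since $m$ is monic of degree $d\ge1$, we have $k\ge1$. Lemma \ref{lem:landau} applied to $P=mQ$ gives $H(m)H(Q)\le C_nH(P)\le C_nH$. Because $H(Q)\ge1$ (it is monic, and if $\deg Q=0$ the count is trivial), this yields $1\le k\le C_nH$. It also yields $H(Q)\le C_nH/k$, and since $H(Q)$ is an integer, $H(Q)\le\lfloor C_nH/k\rfloor$. The summation upper limit $C_nH$ may not be an integer; I read $\sum_{k=1}^{C_nH}$ as a sum over integers $k\le C_nH$.

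\textbf{Counting.} For fixed $d$ and $k$:
\begin{itemize}
\item The number of monic $m$ of degree $d$ with height exactly $k$ is $(2k+1)^d-(2k-1)^d$. This is the count of vectors in $[-k,k]^d\cap\Z^d$ minus those in $[-(k-1),k-1]^d$.
\item The number of monic $Q$ of degree $n-d$ whose $n-d$ non-leading coefficients lie in $[-\lfloor C_nH/k\rfloor,\lfloor C_nH/k\rfloor]$ is $\left(2\lfloor C_nH/k\rfloor+1\right)^{n-d}$.
\end{itemize}
I do not impose that $m$ is irreducible or has a root in $K$; this only enlarges the target set, which is fine for an upper bound. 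Summing over $d$ and $k$ gives the stated inequality by injectivity.

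The main obstacle is making sure Lemma \ref{lem:landau} really holds with the explicit $C_n=2^n\sqrt{n+1}$, since the statement takes that constant as given. If it needs justification, the argument goes as follows. By Mahler multiplicativity, $M(m)M(Q)=M(P)\le\sqrt{n+1}\,H(P)$. The lower bounds $H(m)\le2^dM(m)$ and $H(Q)\le2^{n-d}M(Q)$ then combine to give $H(m)H(Q)\le2^n\sqrt{n+1}\,H(P)$.
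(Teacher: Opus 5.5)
\textbf{Gap at the root $\alpha=0$.} Your route is the one the paper's one-line proof has in mind: the injection $P\mapsto(m,Q)$, the exact count $(2k+1)^d-(2k-1)^d$ for $m$, and the box count for $Q$ via Lemma~\ref{lem:landau}. You carry it out more carefully than the paper does. However, the step ``since $m$ is monic of degree $d\ge1$, we have $k\ge1$'' is false, so the injection does not land in the counted set.

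In this paper $H(P)=\max_{0\le i\le n-1}|a_i|$ ignores the leading coefficient. So the minimal polynomial of $\alpha=0\in K$, namely $m(x)=x$, has height $k=0$. Take $P=x^n$, which lies in $\mathrm{red}_n(H)$ for every $K$ and every $H$, or $P=x(x^2+2)$ with $K=\Q$. For these, the only root in $K$ is $0$, so your map sends $P$ to the pair $(x,Q)$, which belongs to no term with $1\le k\le C_nH$.

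Switching to the height that includes the leading coefficient does not rescue the step. Then $k\ge1$ does hold, but the number of monic $m$ of degree $d$ with height exactly $1$ becomes $3^d$, not $3^d-1$, so $m=x$ is still missed. Two related slips come from the same mixing of conventions: ``$H(Q)\ge1$ because $Q$ is monic'' fails for $Q=x^{n-d}$, and $M(P)\le\sqrt{n+1}\,H(P)$ fails for $P=x^n$. Both are harmless, since $H(Q)=0$ forces $P=x^{n-d}m$ and hence $k=H(P)\le H$. The paper's sketch has the same blind spot.

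\textbf{Repair.}
\begin{enumerate}
\item Choose a \emph{nonzero} root $\alpha\in K$ whenever $P$ has one. Then $m$ is irreducible and $m\ne x$, so $m(0)\ne0$ and $k\ge1$ genuinely. The rest of your argument then goes through unchanged.
\item The leftover polynomials are those whose only root in $K$ is $0$. They have the form $P=xQ$ with $H(Q)=H(P)\le H$, so there are at most $(2H+1)^{n-1}$ of them.
\item Absorb the leftovers into the $(d,k)=(1,1)$ term. The polynomials your map sends to $m=x\mp1$ satisfy $P(\pm1)=0$. Hence $a_0$ is determined by $a_1,\dots,a_{n-1}$, giving at most $2(2H+1)^{n-1}$ of them.
\item Together with the leftovers that is at most $3(2H+1)^{n-1}$. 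The $(1,1)$ term equals $2(2\lfloor C_nH\rfloor+1)^{n-1}$, which is at least $2(12H+1)^{n-1}$ because $C_n\ge4\sqrt3>6$.
\item Since $(12H+1)/(2H+1)\ge13/3>3/2$ for $H\ge1$, this term is large enough to cover them.
\end{enumerate}
With this modification your proof of the stated inequality is complete.
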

The proof of this result follows directly from the combinatorial arguments regarding the height constraints of the factors presented in Section 3.

\begin{corollary}
Suppose $\mathrm{Sub}_n(K) \neq \emptyset$. Then the cardinality of the irreducible monic integer polynomials is bounded by the following expression
\begin{equation*}
    \#\mathrm{irr}_n(H) \le \sum_{F \in \mathrm{Sub}_n(K)} \left( \kappa_F H + C_{1,F} H^{1 - 1/n} \right) \times \left( \frac{w_F}{R_F} \cdot \Vol(S_H(F)) + C_{2,F} \right)
\end{equation*}
where for each subfield $F \in \mathrm{Sub}_n(K)$:
\begin{itemize}
    \item $\kappa_F$ is the residue of the Dedekind zeta function of $F$ at $s = 1$.
    \item $\Vol(S_H(F))$ is the volume of the convex region in the logarithmic unit lattice of $\OF^\times$ bounded by the Cauchy root bound proportional to $H$.
    \item $C_{1,F}$ and $C_{2,F}$ are absolute constants representing the counting error terms associated with the ideal counting theorem and the Lipschitz principle, respectively. Both are effectively computable (e.g., $C_{1,F}$ can be explicitly derived using effective versions of Landau's theorem such as the work of Sunley (1973), and $C_{2,F}$ can be obtained from explicit versions of the Lipschitz principle in the geometry of numbers, such as the work of Widmer (2012)).
\end{itemize}
\end{corollary}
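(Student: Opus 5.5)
The plan is to make the ideal-then-unit decomposition from the irreducible case of Theorem \ref{thm:main} fully explicit, one subfield at a time. Fix $F \in \mathrm{Sub}_n(K)$. Every $P \in \mathrm{irr}_n(H)$ whose roots generate $F$ is the minimal polynomial of some $\alpha \in \OF$ with $\Q(\alpha)=F$, and distinct polynomials have disjoint sets of roots. So it suffices to bound the number of $\alpha \in \OF$ with $|N_{F/\Q}(\alpha)| \le H$ whose conjugates all satisfy the Cauchy bound $|\sigma_i(\alpha)| \le 2H$. This actually overcounts each polynomial by a factor $n$, but we simply discard that gain. Summing over the finitely many $F$ then gives the outer sum in the statement.

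\textbf{Step 1 (ideals).} Send $\alpha$ to the principal ideal $(\alpha)$, whose norm is $|P(0)| \le H$. The number of principal ideals is at most $I_F(H)$, the number of all integral ideals of norm at most $H$. Next I would replace the asymptotic $I_F(X) = \kappa_F X + O(X^{1-1/n})$ from the Preliminaries by an inequality valid for every $H \ge 1$. To do this I would cite an effective version of the ideal theorem (Sunley) with an explicit constant $C_{1,F}$, giving $I_F(H) \le \kappa_F H + C_{1,F} H^{1-1/n}$. Enlarging $C_{1,F}$ absorbs the small-$H$ range.

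\textbf{Step 2 (units).} Fix one generator $\alpha_0$ of each ideal. Every other $\alpha$ is $\alpha_0 u$ with $u \in \OF^\times$. The two-sided bounds $-(n-1)\log(2H) \le \log|\sigma_i(\alpha)| \le \log(2H)$ force $\log(u) = \log(\alpha) - \log(\alpha_0)$ to lie in a region in the trace-zero hyperplane. That region is a translate of $S_H(F)$, the intersection of the hyperplane with a box of side $O(\log H)$, projected onto the unit hyperplane; it is convex and of dimension $r = r_1+r_2-1$. The number of units whose logarithm lands there equals $w_F$, to account for roots of unity in the kernel of $\log$, times the number of points of the unit lattice in the region. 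The unit lattice has covolume $R_F$ up to the usual normalising factor $\sqrt{r+1}$, which we absorb into the definition of $\Vol$. The Lipschitz principle in explicit form (Widmer) then bounds the lattice-point count by $\Vol(S_H(F))/R_F + C$, where the error constant depends only on $F$ and on the Lipschitz parametrisation of the boundary of a box-slice. So the count is at most $\frac{w_F}{R_F}\Vol(S_H(F)) + C_{2,F}$.

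\textbf{Main obstacle.} Widmer's error term for a region of diameter $\sim \log H$ actually grows like $(\log H)^{r-1}$, not like a constant. Taken literally, then, $C_{2,F}$ cannot be absolute, and this is the step I expect to be hardest. I would first check whether the statement can be read with $C_{2,F}$ multiplying a boundary-size term. If not, I would handle it with the crude lattice bound instead: cover the region by translates of a fundamental domain. This gives at most $\Vol(S_H(F)+\mathcal{F})/R_F$ points, where $\mathcal{F}$ is a fundamental domain, and by Steiner's formula this enlarged volume differs from $\Vol(S_H(F))$ by lower-order terms, which would be absorbed into the meaning of $\Vol(S_H(F))$.

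\textbf{Final step.} Multiply the ideal bound by the unit bound and sum over $F \in \mathrm{Sub}_n(K)$, using that the map $\alpha \mapsto P$ is at least injective onto the set of polynomials being counted.
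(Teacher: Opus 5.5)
Your route is the paper's: an effective ideal count (Sunley) times a per-ideal unit count via the Lipschitz principle (Widmer), summed over $\mathrm{Sub}_n(K)$. The paper's proof simply asserts that the per-ideal unit count is at most $\frac{w_F}{R_F}\Vol(S_H(F)) + C_{2,F}$ with $C_{2,F}$ independent of $H$. You are right that this does not follow: for $r=r_1+r_2-1\ge 2$ the Lipschitz principle gives only an error $O((\log H)^{r-1})$, uniformly over translates. (For $r\le 1$ the error is $O(1)$ and there is no issue.) Neither of your fallbacks closes the gap, though. Letting $C_{2,F}$ multiply $(\log H)^{r-1}$ proves a different statement. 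So does replacing $S_H(F)$ by $S_H(F)+\mathcal{F}$ and absorbing the Steiner terms into ``the meaning of $\Vol$''. As it stands, your argument, like the paper's, leaves the unit step unproved when $r\ge 2$.

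The missing idea is hidden in your Step 2, where you call the region for $\log u$ ``a translate of $S_H(F)$''. It is not one. With $\ell_v$ the weighted logarithmic coordinates, $\alpha_0u$ is admissible iff $\ell_v(u)\le e_v\log(2H)-\ell_v(\alpha_0)$ for all $v$. Together with $\sum_v\ell_v(u)=0$, this cuts out a simplex of size $T=n\log(2H)-\log N(\alpha_0)$. Take $S_H(F)$ to be the region cut out by the Cauchy bound itself, i.e.\ the case $N(\alpha_0)=1$, of size $n\log(2H)$.

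Since volume scales like $T^r$, every ideal with $N(\alpha_0)\ge H^{1/2}$ has a volume deficit $\gg (\log H)^{r}$. This beats the $O((\log H)^{r-1})$ Lipschitz error once $H\ge H_0(F)$, so each such ideal contributes at most $\frac{w_F}{R_F}\Vol(S_H(F))$, with no error term. There are at most $I_F(H^{1/2})=O(H^{1/2})$ ideals of smaller norm, and they give a total excess $O(H^{1/2}(\log H)^{r-1})\le C_{2,F}\kappa_F H$. The range $H<H_0$ is absorbed by enlarging $C_{2,F}$, since the right-hand side is at least $C_{2,F}\kappa_F H$. This yields the stated bound with a genuine constant $C_{2,F}$.

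A minor point: the overcount factor is $|\mathrm{Aut}(F/\Q)|$, not $n$, unless $F/\Q$ is Galois. This is harmless since you discard it.
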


\begin{proof}
Every polynomial in $\mathrm{irr}_n(H)$ is a minimal polynomial for some element $\alpha \in \OF$, where $F \in \mathrm{Sub}_n(K)$ is a subfield of degree $n$. Counting these polynomials is equivalent to counting the permissible elements $\alpha$ within each subfield $F$. The element $\alpha$ generates a principal ideal $\mathfrak{a} = (\alpha)$ in $\OF$. Since the algebraic norm of this ideal is bounded by $H$, the explicit version of the Weber (or Landau) ideal counting theorem ensures that the number of such ideals in $\OF$ is controlled by $\kappa_F H + C_{1,F} H^{1 - 1/n}$. For each fixed ideal, the number of units $u \in \OF^\times$ that keep the element $\alpha u$ within the height constraint $H$ corresponds to counting lattice points in the logarithmic lattice of $F$ inside a bounded region of volume $\Vol(S_H(F))$. Based on Dirichlet's unit theorem and volume approximation, the number of such points is bounded by $\frac{w_F}{R_F} \cdot \Vol(S_H(F))$ plus a boundary error term $C_{2,F}$. Multiplying these two independent bounds for each subfield and summing over the set $\mathrm{Sub}_n(K)$ yields the explicit bound.
\end{proof}


\end{document}